\newtheorem{theorem}{Theorem}
\newtheorem{lemma}{Lemma}
\newtheorem{proposition}{Proposition}
\newtheorem{definition}{Definition}
\newtheorem{remark}{Remark}
\newenvironment{proof}{\begin{trivlist} \item[\hskip\labelsep{\it Proof.}]}{$\hfill\Box$\end{trivlist}}
\newcommand{\bsx}{\boldsymbol{x}}
\newcommand{\RR}{\mathbb{R}}
\newcommand{\QQ}{\mathbb{Q}}
\newcommand{\NN}{\mathbb{N}}
\newcommand{\cH}{\mathcal{H}}
\newcommand{\cP}{\mathcal{P}}
\newcommand{\cE}{\mathcal{E}}
\newcommand{\cU}{\mathcal{U}}
\title{Construction algorithms for plane nets in base $b$}
\author{Gunther Leobacher, Friedrich Pillichshammer\thanks{The first two authors are supported by the
Austrian Science Fund (FWF): Projects F5508-N26 (Leobacher)  and
F5509-N26 (Pillichshammer), respectively, which are part of the
Special Research Program ``Quasi-Monte Carlo Methods: Theory and
Applications''.} \ and Thomas Schell\thanks{The third author is supported by the Palfinger AG.}}
\date{}
\begin{document}

\maketitle

\begin{abstract}
The class of $(0,m,s)$-nets in base $b$ has been introduced by Niederreiter as examples of point sets in the $s$-dimensional unit cube with excellent uniform distribution properties. In particular such nets have been proved to have very low discrepancy. This property is essential for the use of nets in quasi-Monte Carlo rules for numerical integration. In this short note we propose two algorithms for the construction of plane $(0,m,2)$-nets in base~$b$.
\end{abstract}

\centerline{\begin{minipage}[hc]{130mm}{
{\em Keywords:} $(t,m,s)$-nets, discrepancy, Hammersley net\\
{\em MSC 2000:} 11K38, 11K31}
\end{minipage}}

\allowdisplaybreaks

\section{Introduction}

In many applications, notably numerical integration based on quasi-Monte Carlo rules, one requires very uniformly distributed point sets in the unit-cube. The quality of the distribution is usually measured by the discrepancy. For a point set $\cP=\{\bsx_1,\ldots,\bsx_{N}\}$ in $[0,1)^s$ and subintervals $J \subseteq [0,1)^s$ the local discrepancy is defined as $$\Delta(J;\cP):=\frac{\#\{n \in \{1,\ldots,N\}\ : \ \bsx_n \in J\}}{N}-{\rm Vol}(J).$$ The discrepancy of $\cP$ is then defined as $$D_N(\cP)=\sup_J |\Delta(J;\cP)|,$$ where the supremum is extended over all intervals $J \subseteq [0,1)^s$. Point sets with discrepancy of order $D_N \ll (\log N)^{s-1}/N$ are called ``low-discrepancy point sets''.

One class of point sets with excellent distribution properties are $(t,m,s)$-nets in base $b$ as introduced by Niederreiter~\cite{nie87} (see also \cite{niesiam}):

Let $s,b \in \NN$, $b \ge 2$. An elementary $b$-adic interval (or box) is an interval of the form $$\prod_{j=1}^s \left[\frac{a_j}{b^{d_j}},\frac{a_j+1}{b^{d_j}}\right)$$ with $d_1,\ldots,d_s \in \NN_0$ and $a_j \in \{0,1,\ldots,b^{d_j}-1\}$.

\begin{definition}[Niederreiter]\rm
Let $b,s,m,t$ be integers such that $b \ge 2$, $s \ge 1$, $m\ge 0$ and $0 \le t \le m$. A $b^m$-element point set $\cP$ in the $s$-dimensional unit cube is called a $(t,m,s)$-net in base $b$ if every elementary $b$-adic interval of volume $b^{t-m}$ contains exactly $b^t$ points of $\cP$.    
\end{definition}

Thus a $(t,m,s)$-net in base $b$ is a $b^m$-element point set $\cP$ in the unit-cube for which the local discrepancy satisfies $\Delta(E;\cP)=0$ for all elementary $b$-adic intervals $E$ of volume $b^{t-m}$. The smaller $t$ is (in the optimal case it is 0), the more demanding is this condition. This property for the local discrepancy is transferred in some sense also to arbitrary sub-intervals of $[0,1)^s$ which still have very low local discrepancy, although it cannot be zero in general. This is reflected in the  discrepancy bounds which are of the form $$D_{b^m}(\cP) \ll_{s,b} b^t \frac{m^{s-1}}{b^m}\ll b^t \frac{(\log N)^{s-1}}{N}\ \ \ \mbox{ if $N=b^m$},$$ and if $\cP$ is a $(t,m,s)$-net in base $b$ (see one of \cite{DP10,LP14,nie87,niesiam}).\\

Motivated by the discrepancy bounds the parameter $t$ in the definition of nets is called the ``quality parameter'' and, as already indicated, a quality parameter which is as small as possible would be appreciated. Unfortunately, the optimal value $t=0$ is not achievable for all possible choices of parameter pairs $(b,s)$. It is well known that a $(0,m,s)$-net in base $b$ can only exist if $s \le b+1$. This follows easily from the following proposition: 

\begin{proposition} \label{th:nonet}
There is no $(0,2,b+2)$-net in base $b$.
\end{proposition}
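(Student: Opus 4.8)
The plan is to reduce the statement to the classical fact that there exist at most $b-1$ mutually orthogonal Latin squares of order $b$. Suppose, for a contradiction, that a $(0,2,b+2)$-net $\cP=\{\bsx_1,\ldots,\bsx_{b^2}\}$ in base $b$ exists, and write $\bsx_n=(x_n^{(1)},\ldots,x_n^{(b+2)})$. For each coordinate $j$ let $y_n^{(j)}:=\lfloor b\,x_n^{(j)}\rfloor\in\{0,1,\ldots,b-1\}$ denote the first base-$b$ digit of $x_n^{(j)}$. Everything will be driven by the net condition applied to the two-dimensional elementary intervals of volume $b^{-2}$.

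First I would record the key distributional fact. For a fixed pair of distinct coordinates $j\neq k$, the elementary $b$-adic intervals with $d_j=d_k=1$ and all other $d_i=0$ have volume $b^{-2}$, and there are exactly $b^2$ of them; by the definition of a $(0,2,b+2)$-net each of them contains exactly one of the $b^2$ points. This says precisely that the map $n\mapsto(y_n^{(j)},y_n^{(k)})$ is a bijection from $\{1,\ldots,b^2\}$ onto $\{0,\ldots,b-1\}^2$. In other words, the first-digit functions $y^{(1)},\ldots,y^{(b+2)}$ are pairwise orthogonal.

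Next I would convert these functions into Latin squares. Since $n\mapsto(y_n^{(1)},y_n^{(2)})$ is a bijection onto $\{0,\ldots,b-1\}^2$, I can use the first two coordinates to index the points: identify the point $n$ with the pair $(r,c):=(y_n^{(1)},y_n^{(2)})$. For each $j\in\{3,\ldots,b+2\}$ define a $b\times b$ array $L_j$ by setting $L_j(r,c):=y_n^{(j)}$ for the unique point $n$ with $(y_n^{(1)},y_n^{(2)})=(r,c)$. Orthogonality of the pair $(y^{(2)},y^{(j)})$ forces each row of $L_j$ to be a permutation of $\{0,\ldots,b-1\}$, orthogonality of $(y^{(1)},y^{(j)})$ forces each column to be a permutation, so $L_j$ is a Latin square; and orthogonality of $(y^{(j)},y^{(k)})$ for $3\le j<k\le b+2$ translates exactly into the statement that $L_j$ and $L_k$ are orthogonal. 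Thus $L_3,\ldots,L_{b+2}$ are $b$ mutually orthogonal Latin squares of order $b$.

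It then remains to derive a contradiction from the existence of $b$ mutually orthogonal Latin squares of order $b$, and this is where the main (though standard) work lies. I would normalise each $L_j$ by relabelling its symbols so that its first row reads $0,1,\ldots,b-1$; relabelling symbols preserves both the Latin and the orthogonality properties. Fixing attention on the entry in row $1$, column $0$, the Latin property rules out the value $0$, so each normalised $L_j$ satisfies $L_j(1,0)\in\{1,\ldots,b-1\}$. The key observation is that these entries must be distinct across $j$: if $L_j(1,0)=L_k(1,0)=a$ for some $j\neq k$, then the ordered pair $(a,a)$ would occur both at position $(1,0)$ and at position $(0,a)$ (the latter because of the normalised first row), contradicting orthogonality of $L_j$ and $L_k$. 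Hence the $b$ values $L_3(1,0),\ldots,L_{b+2}(1,0)$ are distinct elements of the $(b-1)$-element set $\{1,\ldots,b-1\}$, which is impossible. This contradiction proves that no $(0,2,b+2)$-net in base $b$ can exist. The only genuinely delicate point is the bookkeeping in the net-to-Latin-square translation; once the pairwise-orthogonality statement is in place, the counting argument on the entry at position $(1,0)$ closes the proof.
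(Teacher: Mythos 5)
Your proof is correct, but it takes a genuinely different route from the paper. The paper argues by graph colouring: it projects a hypothetical $(0,2,b+2)$-net onto the first $b+1$ coordinates, colours each of the $b^2$ projected points with the first digit of the discarded coordinate, joins two points whenever they lie in a common $(b+1)$-dimensional elementary box of volume $b^{-1}$, and observes that adjacent points must receive different colours while every vertex has degree $(b+1)(b-1)=b^2-1$; hence the graph is the complete graph on $b^2$ vertices, which admits no proper colouring with only $b$ colours. You instead pass through the classical correspondence with mutually orthogonal Latin squares: the first-digit maps are pairwise orthogonal by the net property, two of them serve as row and column indices, and the remaining $b$ give $b$ mutually orthogonal Latin squares of order $b$, after which you prove the standard bound that at most $b-1$ such squares can exist. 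Both proofs extract exactly the same combinatorial data (the joint behaviour of first digits over pairs of coordinates); yours is essentially the classical literature argument (the net--MOLS equivalence goes back to Niederreiter), which is precisely what the paper's graph-theoretic proof was offered as a new alternative to. Your route buys the connection to the rich theory of Latin squares and finite projective planes, at the cost of a longer translation step; the paper's route is shorter and more self-contained. One cosmetic slip worth fixing: with your convention $(r,c)=(y_n^{(1)},y_n^{(2)})$ and $r$ as the row index, it is orthogonality of $(y^{(1)},y^{(j)})$ that forces the \emph{rows} of $L_j$ to be permutations and orthogonality of $(y^{(2)},y^{(j)})$ that handles the \emph{columns} --- you state it the other way around. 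Since both orthogonality relations hold, this swap does not affect the validity of the argument.
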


This result is very well known and there are several proofs available in the literature (see, e.g., \cite{DP10,LP14,nie87,niesiam}). Nevertheless we give here a short and new proof which is based on arguments from Graph Theory and which might bring some new aspects into the theory of $(t,m,s)$-nets. 

\begin{proof}
Suppose to the contrary that there is a $(0,2,b+2)$-net in base $b$. Then its projection to the hyperplane 
orthogonal to the $(b+2)$nd coordinate axis forms a $(0,2,b+1)$-net.
To every point in this net we may assign one of the colors 
$\{0,\ldots,b-1\}$, namely the first digit of the $(b+2)$nd coordinate
of the point in the original net.

Now we construct a graph $G=(V,E)$ by setting $V$ equal to the points
in the projection (hence $|V|=b^2$) and letting $\{v_1,v_2\}\in E$ iff $v_1$ and $v_2$ lie
in the same $b+1$-dimensional elementary box of volume $b^{-1}$. According to our construction
two adjacent vertices have different colors.

Every point in $V$ is contained in $b+1$ such elementary intervals
and every such interval contains exactly $b$ points. Thus
every vertex of $G$ has degree $r=(b+1)(b-1)=b^2-1=|V|-1$. Thus $G$ is isomorphic to the complete graph with $b^2$ vertices, for which 
we have found a coloring by $b$ colors. But this is of course impossible.
\end{proof}

Usually, $(t,m,s)$-nets are constructed with the digital method which is based on $m \times m$-matrices over a finite commutative ring with $b$ elements (one matrix per coordinate).
If $b$ is a prime power, then there always exists a $(0,m,b+1)$-net in base $b$. More information about $(t,m,s)$-nets can be found in \cite{DP10,LP14,nie87,niesiam}.

In this short note we will mainly be concerned with the two-dimensional case. Here one example of a $(0,m,2)$-net in base $b$ is the well known Hammersley net in base $b$ 
\begin{equation}\label{Ham}
\cH_{m,b}=\Big\{\Big(\frac{t_m}{b}+\frac{t_{m-1}}{b^2}+\cdots + \frac{t_1}{b^m},\frac{t_{1}}{b}+\frac{t_{2}}{b^2}  +\cdots + \frac{t_m}{b^m} \Big) \ : \ 
                       t_1,\ldots,t_m \in \{0,1,\ldots,b-1\}   \Big\}.
\end{equation}

Combining results by Dick and Kritzer~\cite{DiKri} and by De Clerck~\cite{DeCl} we obtain:  for every $(0,m,2)$-net $\cP$ in base $b$ we have 
\begin{equation}\label{disc0m2}
D_{b^m}(\cP)\le \frac{1}{b^m}\left(c_b m+9+\frac{4}{b}\right), \ \ \ \mbox{ where }\  c_b:=\left\{
\begin{array}{ll}
\frac{b^2}{b+1} & \mbox{ if $b$ is even},\\
b-1 & \mbox{ if $b$ is odd}. 
\end{array}
\right.
\end{equation}
So the discrepancy of a $(0,m,2)$-net is of order of magnitude $O((\log N)/N)$, where $N=b^m=|\cP|$. According to a celebrated result by Schmidt~\cite{Schm72distrib} this order is the best possible for the discrepancy of any $N$-element plane point set.\\

Our aim is to present two interesting construction algorithms which can in principle construct every $(0,m,2)$-net in base $b$. This way we construct point sets with optimal order of star discrepancy.

\section{The first algorithm}

If we are given a $(0,m,s)$-net in base $b$, say $\cP=\{\bsx_1,\bsx_2,\ldots,\bsx_{b^m}\}$, then each point $\bsx_n$ belongs to a $b$-adic $b^m \times b^m$ box of the form 
\begin{equation}\label{boxm}
\prod_{j=1}^s \left[\frac{u_j(n)}{b^{m}},\frac{u_j(n)+1}{b^{m}}\right)
\end{equation}
where $u_j(n)=\lfloor b^m x_{n,j}\rfloor$ whenever $x_{n,j}$ is the $j$th component of $\bsx_n$. It is elementary to see that the $(0,m,s)$-net property remains valid if one shifts the elements of a $(0,m,s)$-net within their corresponding $b$-adic boxes of the form \eqref{boxm}.

So, instead of constructing point sets, we are now going to construct sets of $b$-adic boxes of the form \eqref{boxm}.

For a given box $X=\prod_{j=1}^s [\tfrac{u_j}{b^{m}},\tfrac{u_j+1}{b^{m}})$ let $\cE_m(X)$ be the set of all $b$-adic boxes of volume $b^{-m}$ which contains $X$ as a subset, $$\cE_m(X)=\{E \ : \ \mbox{$E$ is a $b$-adic box of volume $b^{-m}$ with $X \subseteq E$}\}.$$

We propose the following algorithm for the construction of finite sequences of $b$-adic boxes $X=\prod_{j=1}^2 [\tfrac{u_j}{b^{m}},\tfrac{u_j+1}{b^{m}})$:

\begin{algorithm}[H]
\caption{Construction of a finite sequence of boxes of the form \eqref{boxm}}\label{alg1}
\begin{algorithmic}[1]
\STATE \textbf{Input:} base $b$, resolution $m$%\\[2mm]
%\STATE \textbf{Output:} finite sequence $X_0,X_1,\ldots,X_n$ of boxes of the form \eqref{boxm} \\[2mm]
%\BEGIN
\STATE Set $n=1$;
\STATE Set $\cU_1=[0,1)^2$;
\REPEAT
\STATE Choose an arbitrary box $$X_n=\prod_{j=1}^2 \left[\frac{u_j(n)}{b^{m}},\frac{u_j(n)+1}{b^{m}}\right)\subseteq \cU_n ;$$ 
\STATE Set $n=n+1$;
\STATE Set $$\cU_n=\cU_{n-1} \setminus \bigcup_{E \in \cE_m(X_{n-1})}E;$$
\UNTIL{$\cU_n=\emptyset$}
\RETURN $X_1,X_2,\ldots,X_n$
%\END
\end{algorithmic}
\end{algorithm}

An example of a construction according to Algorithm~\ref{alg1} is illustrated in Figure~\ref{f1}.

\begin{figure}[htp]
\begin{center}
\input{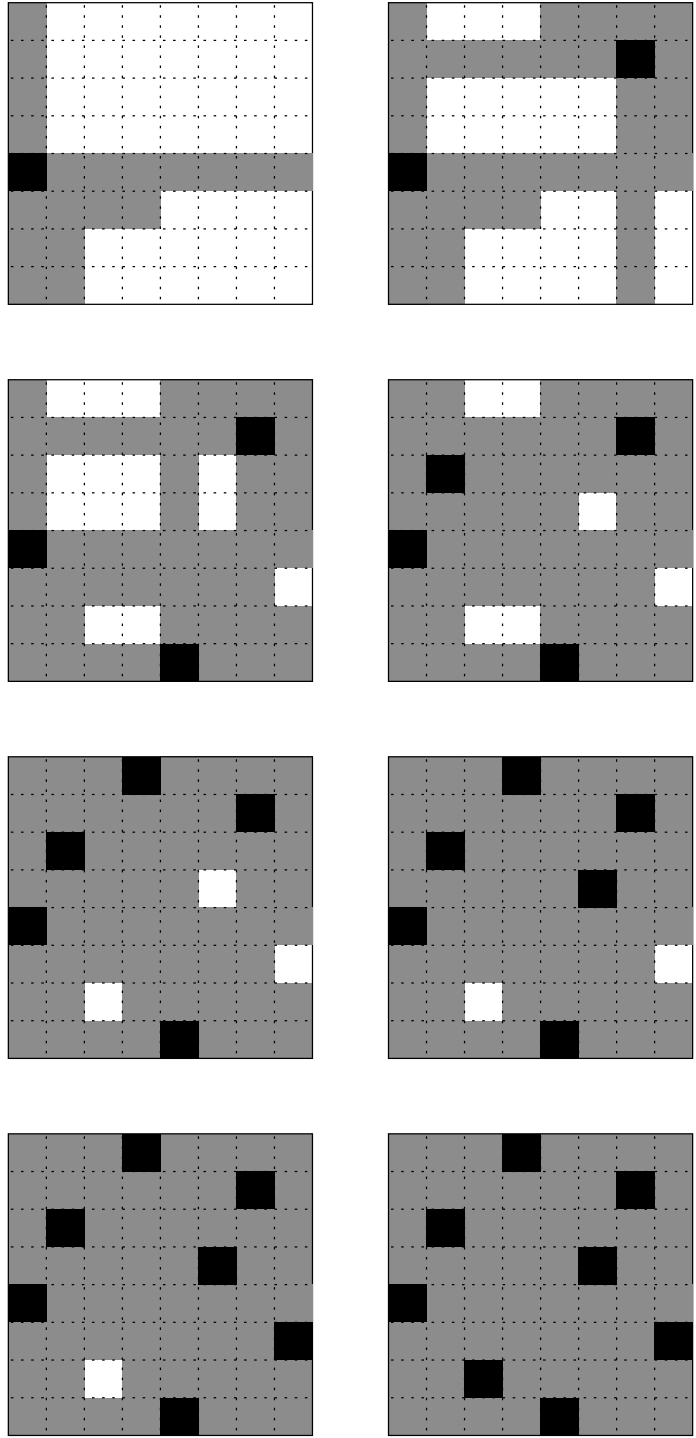}
     \caption{Algorithm~\ref{alg1} for $b=s=2$ and $m=3$; The chosen $X_1,\ldots,X_8$ are colored black. The corresponding $b$-adic boxes are in dark gray. }
     \label{f1}
\end{center}
\end{figure}

\begin{theorem}
Algorithm~\ref{alg1} terminates after exactly $b^m$ steps and the outcome yields a $(0,m,2)$-net in base $b$. In particular, the so constructed point set satisfies the discrepancy bound \eqref{disc0m2}.
\end{theorem}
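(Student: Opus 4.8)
The plan is to separate the two assertions --- that the loop performs exactly $b^m$ iterations, and that the resulting boxes carry a $(0,m,2)$-net --- and to reduce both to a single combinatorial statement about the chosen boxes. Throughout I identify a box $X_n=\prod_{j=1}^2[u_j(n)/b^m,(u_j(n)+1)/b^m)$ with the pair $(u_1(n),u_2(n))\in\{0,\dots,b^m-1\}^2$ and write each coordinate in base $b$. A $b$-adic box of volume $b^{-m}$ in dimension $2$ is specified by a split $d_1+d_2=m$ of the two side-resolutions, so $\cE_m(X_n)$ consists of exactly the $m+1$ such boxes obtained for $d_1=0,1,\dots,m$; for each such ``shape'' there is precisely one box containing $X_n$, determined by the first $d_1$ base-$b$ digits of $u_1(n)$ and the first $m-d_1$ digits of $u_2(n)$. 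First I would record that the algorithm is well defined and terminates: $\cU_n$ is always a union of boxes of the form \eqref{boxm}, so whenever $\cU_n\neq\emptyset$ a legal choice $X_n\subseteq\cU_n$ exists, and since $X_n\subseteq\bigcup_{E\in\cE_m(X_n)}E$ every iteration deletes at least $X_n$ from $\cU$, forcing termination after finitely many steps.

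The structural heart is the following observation. Because $X_n\subseteq\cU_n$, the box $X_n$ lies in none of the elementary boxes deleted in earlier steps; hence for every shape the unique shape-box containing $X_n$ is one not containing any earlier $X_i$. Two consequences follow immediately: (i) no two of the chosen boxes lie in a common elementary box of volume $b^{-m}$ (``separation''), since the appearance of $X_i$ in such a box would have deleted it before $X_j$ could be picked; and (ii) each iteration consumes exactly one previously unused box of each of the $m+1$ shapes. Since there are only $b^m$ boxes of each fixed shape, (ii) already gives that the number $N$ of iterations satisfies $N\le b^m$, and that if $N=b^m$ then, for every shape, all $b^m$ boxes of that shape are used exactly once. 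Combined with (i), $N=b^m$ forces every elementary box of volume $b^{-m}$ to contain exactly one chosen box; placing one point in each $X_n$ (the shift remark preceding the algorithm shows the net property is insensitive to the position within the box) then yields a $(0,m,2)$-net, and the discrepancy bound \eqref{disc0m2} is inherited.

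It therefore remains to prove $N\ge b^m$, i.e.\ that the loop cannot stop early: whenever fewer than $b^m$ boxes have been chosen, $\cU_n\neq\emptyset$. Equivalently, by (i), every separated family that cannot be extended already has $b^m$ members (``maximal implies maximum''). This is the main obstacle, and I expect it to be the only genuinely delicate point. My approach is to assume $N<b^m$ and construct an admissible box to adjoin, contradicting termination. Since the chosen boxes have pairwise distinct signatures for every shape, some elementary box $E$ of a balanced shape $d_1=k$ contains no $X_i$; I would look for a new box inside $E$. Fixing the first $k$ digits of $u_1$ and the first $m-k$ digits of $u_2$ to match $E$, the requirement of avoiding all previously deleted boxes splits cleanly: the constraints coming from shapes $d_1=k'>k$ restrict only the free digits of $u_1$, those from shapes $k'<k$ only the free digits of $u_2$, and shape $k$ itself is automatically satisfied because $E$ is empty. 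The separation property, applied at each scale, forces the forbidden digit-prefixes at each level to be pairwise distinct, so they occupy disjoint cylinders; a counting/pigeonhole argument should then show that, as long as $N<b^m$, these cylinders cannot exhaust all choices of the free digits, so an admissible box exists.

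The crux is exactly this last counting step --- verifying that the disjoint forbidden cylinders, accumulated over all levels, never cover the whole digit-tree when $N<b^m$. Here one must control the interaction between different levels (a shallow forbidden cylinder can swallow deeper ones), so a naive union bound is too weak; I would organise this by induction on $m$ (or on the number of free digit-levels), peeling off the first base-$b$ digit and exploiting the separation margin that the decomposition creates. Everything else in the theorem is bookkeeping on top of observations (i) and (ii).
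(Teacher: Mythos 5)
Your reduction of the theorem to two facts is sound and matches the paper's structure: your observation (ii) (each iteration consumes one fresh elementary box of each of the $m+1$ shapes, hence $N\le b^m$) is the paper's Lemma~\ref{pr1} combined with Lemma~\ref{le1}, and your argument that $N=b^m$ forces every elementary box of volume $b^{-m}$ to contain exactly one chosen box is precisely the paper's Lemma~\ref{pr2} (you count per shape, the paper counts all $b^m(m+1)$ boxes at once --- equivalent). The transfer of the net property to points placed anywhere inside the boxes, and hence the discrepancy bound \eqref{disc0m2}, is indeed just the shift remark plus the cited inequality.

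However, there is a genuine gap at exactly the point you yourself flag as the crux: the claim that the algorithm cannot stop with $N<b^m$ (the paper's Lemma~\ref{pr3}). What you offer for this is a plan, not a proof: ``a counting/pigeonhole argument should then show \dots'', ``I would organise this by induction \dots''. You correctly observe that a naive union bound over the forbidden cylinders fails because of interactions between levels, but you do not exhibit the induction that overcomes this, and this is the only nontrivial content of the theorem. Moreover, your starting point (an empty box $E$ of a \emph{balanced} shape $d_1=k$, with constraints then propagating in both digit-directions simultaneously) creates precisely the two-sided interaction you worry about. The paper avoids it by starting at an \emph{extreme} shape and marching monotonically through the shapes: since $N<b^m$, some full-height column $[\tfrac{a}{b^m},\tfrac{a+1}{b^m})\times[0,1)$ (shape $d_1=m$) contains no $X_i$; then, inductively, given an empty box of shape $(m-k,k)$, one of the $b$ boxes of shape $(m-k-1,k+1)$ lying over the same digit-prefixes must also be empty --- otherwise $b$ chosen boxes would sit inside $b-1$ elementary boxes of volume $b^{-m}$ (the columns of width $b^{-(m-k-1)}$ other than the empty one), and pigeonhole would put two chosen boxes in one elementary box, contradicting your separation property (i). Running $k=0,1,\ldots,m$ produces one empty elementary box of \emph{every} shape, all containing a common $b^{-m}\times b^{-m}$ square; that square therefore still lies in $\cU_n$, so the algorithm could not have terminated. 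This single-direction pigeonhole chain is the missing step; without it (or a completed version of your two-sided induction), the main assertion of the theorem is unproved.
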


We split the proof into several short lemmas:

\begin{lemma}\label{pr1}
Algorithm \ref{alg1} terminates after at most $b^{m}$ steps, i.e., $n \le b^m$.
\end{lemma}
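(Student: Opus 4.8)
The plan is to show that each step of the algorithm removes an entire ``column'' of the unit square, and that the columns removed in different steps are pairwise distinct. Since $[0,1)^2$ contains exactly $b^m$ columns, this immediately caps the number of steps at $b^m$.

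First I would pin down the structure of $\cE_m(X_n)$. The chosen box $X_n=\prod_{j=1}^2[u_j(n)/b^m,(u_j(n)+1)/b^m)$ has volume $b^{-2m}$, while a two-dimensional elementary $b$-adic box $E$ of volume $b^{-m}$ has the form $[a_1/b^{d_1},(a_1+1)/b^{d_1})\times[a_2/b^{d_2},(a_2+1)/b^{d_2})$ with $d_1+d_2=m$. For each admissible pair $(d_1,d_2)$ there is exactly one such $E$ containing $X_n$, so $\cE_m(X_n)$ has $m+1$ elements. The crucial observation is that the choice $(d_1,d_2)=(m,0)$ produces the full vertical strip
$$C(n):=\left[\frac{u_1(n)}{b^m},\frac{u_1(n)+1}{b^m}\right)\times[0,1),$$
which belongs to $\cE_m(X_n)$ and is therefore entirely deleted when passing from $\cU_n$ to $\cU_{n+1}$; that is, $C(n)\cap\cU_{n+1}=\emptyset$.

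The only step that requires a short argument is the distinctness of the columns $C(1),C(2),\ldots$. Fix $k<n$. Since $C(k)\subseteq\bigcup_{E\in\cE_m(X_k)}E$, the column $C(k)$ is disjoint from $\cU_{k+1}$, and because the sequence $\cU_1\supseteq\cU_2\supseteq\cdots$ is decreasing we get $C(k)\cap\cU_n=\emptyset$. On the other hand $X_n\subseteq\cU_n$ and $X_n\subseteq C(n)$, whence the nonempty box $X_n$ witnesses $C(n)\cap\cU_n\neq\emptyset$. Thus $C(n)\neq C(k)$, so distinct steps remove distinct columns. As there are precisely $b^m$ columns $[a/b^m,(a+1)/b^m)\times[0,1)$ with $a\in\{0,\ldots,b^m-1\}$, at most $b^m$ boxes can be chosen, i.e. $n\le b^m$.

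I do not expect any genuine obstacle here: the whole proof rests on identifying the column $C(n)$ as a member of $\cE_m(X_n)$ and on the monotonicity of the sets $\cU_n$. One could run the identical argument with rows instead of columns by using the pair $(d_1,d_2)=(0,m)$, and indeed the fact that both a full row and a full column are deleted at each step is what will later drive the exactness of $b^m$ steps and the net property.
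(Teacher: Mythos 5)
Your proof is correct and follows essentially the same route as the paper's: both arguments pigeonhole the chosen boxes into the $b^m$ vertical columns $[a/b^m,(a+1)/b^m)\times[0,1)$. The only difference is that you spell out explicitly (via the monotonicity $\cU_1\supseteq\cU_2\supseteq\cdots$ and the fact that the column of $X_k$ lies in $\cE_m(X_k)$) why distinct steps occupy distinct columns, a point the paper leaves implicit.
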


\begin{proof}
Each $X_j$ belongs to exactly one $b$-adic interval of the form $$\left[\frac{a}{b^m},\frac{a+1}{b^m}\right) \times [0,1) \ \ \ \ \mbox{ for $a\in \{0,1,\ldots,b^m-1\}$.}$$ Hence the result follows since $$\bigcup_{a=0}^{b^m -1}  \left[\frac{a}{b^m},\frac{a+1}{b^m}\right) \times [0,1) =[0,1)^2.$$
\end{proof}

\begin{lemma}\label{pr2}
If Algorithm~\ref{alg1} terminates after exactly $b^{m}$ steps, then the output sequence $X_1,X_2,\ldots,X_{b^m}$ constitutes a $(0,m,2)$-net in base $b$.
\end{lemma}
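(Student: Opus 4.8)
The plan is to show that the families $\cE_m(X_1),\ldots,\cE_m(X_{b^m})$ partition the collection of all $b$-adic boxes of volume $b^{-m}$; this partition property is, after placing one point inside each $X_n$, precisely the $(0,m,2)$-net condition (the placement being legitimate by the shift-invariance remark preceding the algorithm).

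First I would determine the cardinality of $\cE_m(X)$ for a box $X=\prod_{j=1}^2[u_j/b^m,(u_j+1)/b^m)$. A $b$-adic box of volume $b^{-m}$ in the plane has the form $[a_1/b^{d_1},(a_1+1)/b^{d_1})\times[a_2/b^{d_2},(a_2+1)/b^{d_2})$ with $d_1+d_2=m$. As $X$ is finer than every such box, for each of the $m+1$ admissible shapes $(d_1,d_2)$ there is exactly one box containing $X$, so $|\cE_m(X)|=m+1$. The same bookkeeping shows the total number of $b$-adic boxes of volume $b^{-m}$ equals $\sum_{d_1=0}^{m}b^{d_1}b^{m-d_1}=(m+1)b^m$.

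The crucial step is disjointness of these families. Unrolling the recursion gives $\cU_n=[0,1)^2\setminus\bigcup_{j<n}\bigcup_{E\in\cE_m(X_j)}E$, so the requirement $X_n\subseteq\cU_n$ forces $X_n$ to be disjoint from every previously used box. Since $X_n$ is finer than any $b$-adic box of volume $b^{-m}$, it is either contained in or disjoint from each such box; hence disjointness from the used boxes means precisely that $E\notin\cE_m(X_n)$ whenever $E\in\cE_m(X_j)$ for some $j<n$. Therefore $\cE_m(X_1),\ldots,\cE_m(X_{b^m})$ are pairwise disjoint.

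Finally I would combine the counts, using the hypothesis that the algorithm runs for exactly $b^m$ steps. The $b^m$ pairwise disjoint families, each of size $m+1$, have union of size $(m+1)b^m$, which is exactly the total number of available boxes. Consequently every $b$-adic box $E$ of volume $b^{-m}$ lies in precisely one $\cE_m(X_n)$, i.e., contains exactly one box $X_n$. Placing a single point in each $X_n$ then produces a $b^m$-point set meeting every elementary $b$-adic interval of volume $b^{-m}$ in exactly one point, which is the defining property of a $(0,m,2)$-net. I expect the one delicate point to be the clean justification that the set inclusion $X_n\subseteq\cU_n$ upgrades to genuine disjointness of the $\cE_m$-families rather than mere non-containment; the resolution is the observation that a volume-$b^{-m}$ box is always coarser than $X_n$, so partial overlap cannot occur.
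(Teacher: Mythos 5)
Your proposal is correct and follows essentially the same route as the paper: it establishes $|\cE_m(X_n)|=m+1$, counts $(m+1)b^m$ boxes in total, proves pairwise disjointness of the families $\cE_m(X_n)$, and concludes by exact counting that every elementary box of volume $b^{-m}$ contains exactly one $X_n$. The only difference is cosmetic: you spell out the disjointness step (via the dichotomy that a volume-$b^{-m}$ box either contains $X_n$ or is disjoint from it), which the paper dismisses with ``Of course''.
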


For the proof we need the following easy lemma which we state for arbitrary dimension $s$:

\begin{lemma}\label{le1}
\begin{enumerate}
\item There are exactly $b^m {m+s-1 \choose m}$ many $s$-dimensional $b$-adic intervals with volume $b^{-m}$.
\item Every $b$-adic interval of the form $X=\prod_{j=1}^s [\tfrac{u_j}{b^{m}},\tfrac{u_j+1}{b^{m}})$ is contained in exactly ${m+s-1 \choose m}$ $s$-dimensional $b$-adic intervals with volume $b^{-m}$, i.e., $|\cE_m(X)|={m+s-1 \choose m}$.
\end{enumerate} 
\end{lemma}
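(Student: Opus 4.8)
The plan is to prove both statements by direct enumeration, organising everything around the ``shape'' of a $b$-adic box, namely its vector of resolutions $(d_1,\ldots,d_s)$. The starting observation is that the box $\prod_{j=1}^s [a_j/b^{d_j},(a_j+1)/b^{d_j})$ has volume $b^{-(d_1+\cdots+d_s)}$, so it has volume $b^{-m}$ precisely when $d_1+\cdots+d_s=m$. The number of admissible shapes is therefore the number of solutions of $d_1+\cdots+d_s=m$ in non-negative integers, which by the standard stars-and-bars count equals $\binom{m+s-1}{s-1}=\binom{m+s-1}{m}$.

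For part~(1) I would then count, for a fixed shape $(d_1,\ldots,d_s)$, the available positions: each index $a_j$ ranges freely over $\{0,1,\ldots,b^{d_j}-1\}$, giving $\prod_{j=1}^s b^{d_j}=b^{d_1+\cdots+d_s}=b^m$ boxes of that shape. Summing over the $\binom{m+s-1}{m}$ shapes yields $b^m\binom{m+s-1}{m}$ boxes in total, as claimed.

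For part~(2) I would fix the finest box $X$, whose shape is $(m,\ldots,m)$, and note that a box $E$ of volume $b^{-m}$ contains $X$ if and only if it does so in every coordinate. The key point is that once the shape $(d_1,\ldots,d_s)$ of $E$ is fixed, $E$ is uniquely determined: in each coordinate we have $d_j\le m$ (automatically, since $\sum_i d_i=m$ and $d_i\ge 0$), so by the nestedness of $b$-adic intervals there is exactly one $b$-adic interval of length $b^{-d_j}$ containing $[u_j/b^m,(u_j+1)/b^m)$, namely the one with $a_j=\lfloor u_j/b^{m-d_j}\rfloor$. Hence the number of boxes $E\in\cE_m(X)$ equals the number of admissible shapes, i.e.\ $\binom{m+s-1}{m}$.

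There is no genuine obstacle here; the only points requiring a little care are the identity $\binom{m+s-1}{s-1}=\binom{m+s-1}{m}$ and the uniqueness of the containing interval in each coordinate in part~(2), which rests on the fact that two $b$-adic intervals are either nested or disjoint. As an aside, one could alternatively deduce~(1) from~(2) by double counting the incidences between the $b^{ms}$ finest boxes and the volume-$b^{-m}$ boxes, using that each volume-$b^{-m}$ box contains exactly $b^{m(s-1)}$ finest boxes; but the direct count above is cleaner.
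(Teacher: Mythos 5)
Your proposal is correct and follows essentially the same route as the paper: part~(1) by summing the $b^{d_1+\cdots+d_s}=b^m$ positions over the $\binom{m+s-1}{m}$ admissible shape vectors $(d_1,\ldots,d_s)$ with $d_1+\cdots+d_s=m$, and part~(2) by observing that for each fixed shape there is exactly one containing box, so $|\cE_m(X)|$ equals the number of shapes. Your explicit formula $a_j=\lfloor u_j/b^{m-d_j}\rfloor$ and the nestedness remark merely make precise the uniqueness step that the paper states without proof.
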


\begin{proof}
\begin{enumerate}
\item The requested number is given by $$\sum_{d_1,\ldots,d_s=0\atop d_1+\cdots+d_s=m}^{\infty} \sum_{a_1=0}^{b^{d_1}-1}\cdots \sum_{a_s=0}^{b^{d_s}-1} 1=\sum_{d_1,\ldots,d_s=0\atop d_1+\cdots+d_s=m}^{\infty} b^{d_1+\cdots+d_s}=b^m {m+s-1\choose m}.$$
\item For fixed $(d_1,\ldots,d_s) \in \NN_0^s$ with $d_1+\cdots+d_s=m$ there is exactly one choice $$(a_1,\ldots,a_s) \in \prod_{j=0}^s \{0,1,\ldots,b^{d_j}-1\}$$ such that $$\prod_{j=1}^s \left[\frac{u_j}{b^{m}},\frac{u_j+1}{b^{m}}\right) \subseteq \prod_{j=1}^s \left[\frac{a_j}{b^{d_j}},\frac{a_j+1}{b^{d_j}}\right).$$ Hence the requested number is exactly the number of $(d_1,\ldots,d_s) \in \NN_0^s$ with $d_1+\cdots+d_s=m$ and this is ${m+s-1 \choose m}$.
\end{enumerate}
\end{proof}

Now we give the proof of Lemma~\ref{pr2}:

\begin{proof}
Assume that Algorithm~\ref{alg1} constructs the finite sequence $X_1,X_2,\ldots,X_{b^m}$. According to Lemma~\ref{le1} we have $|\cE_m(X_{\ell})|={m+1\choose m}=m+1$. Each $b$-adic interval $E \in \cE_m(X_{\ell})$ contains exactly one element of $X_1,\ldots,X_{b^m}$, namely $X_\ell$, which is the correct portion. Of course, $$\cE_m(X_{\ell}) \cap \cE_m(X_{k})=\emptyset \ \ \ \mbox{ whenever $\ell\not=k$.}$$ Hence the number of $b$-adic boxes of volume $b^{-m}$ which contain exactly one of $X_1,X_2,\ldots,X_{b^m}$ is given by $$\sum_{\ell=1}^{b^m}| \cE_m(X_{\ell})| = b^m (m+1).$$ This is already the number of all $b$-adic boxes in dimension $2$ with volume $b^{-m}$. Hence, every $b$-adic box in dimension $2$ with volume $b^{-m}$ contains exactly one of $X_1,X_2,\ldots,X_{b^m}$.  This means that $X_1,X_2,\ldots,X_{b^m}$ constitute a $(0,m,2)$-net in base $b$.
\end{proof}

\begin{lemma}\label{pr3}
Algorithm~\ref{alg1} terminates after exactly $b^m$ steps. 
\end{lemma}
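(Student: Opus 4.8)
The plan is to combine the upper bound $n\le b^m$ from Lemma~\ref{pr1} with a matching lower bound, namely that the algorithm cannot stop before $b^m$ boxes have been placed. First I would record a structural observation that makes the algorithm well defined: by induction on $n$ the set $\cU_n$ is always a union of boxes of the form \eqref{boxm}, because $\cU_1=[0,1)^2$ is such a union and each removed set $\bigcup_{E\in\cE_m(X_{n-1})}E$ is a union of elementary boxes of volume $b^{-m}$, each of which in turn splits into $b^m$ boxes of the form \eqref{boxm}. Consequently, as long as $\cU_n\neq\emptyset$ it contains at least one admissible box and the loop performs another step; thus the algorithm halts exactly when $\cU_n=\emptyset$, and it suffices to show that $\cU_n\neq\emptyset$ whenever fewer than $b^m$ boxes have been chosen.

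The key reformulation is that the chosen boxes $X_1,\ldots,X_n$ are pairwise \emph{non-conflicting}: no two of them lie in a common elementary box of volume $b^{-m}$ (equivalently $\cE_m(X_\ell)\cap\cE_m(X_k)=\emptyset$ for $\ell\neq k$, as already noted in the proof of Lemma~\ref{pr2}), since each new $X_n$ is taken from $\cU_n$, which has had all boxes conflicting with $X_1,\ldots,X_{n-1}$ removed. By Lemma~\ref{le1} each $\cE_m(X_\ell)$ contains exactly one box of each of the $m+1$ ``shapes'' $(d_1,d_2)$ with $d_1+d_2=m$, and by disjointness the $n$ steps consume $n(m+1)$ \emph{distinct} elementary boxes out of the $b^m(m+1)$ available. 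The condition $\cU_n=\emptyset$ says that every box of the form \eqref{boxm} is covered. I would aim to upgrade this to the statement that every elementary box of volume $b^{-m}$ actually \emph{contains} one of the $X_\ell$; once this is shown, all $b^m(m+1)$ elementary boxes are consumed, forcing $n(m+1)=b^m(m+1)$ and hence $n=b^m$.

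To prove that an unused elementary box $E$ (one containing no chosen $X_\ell$) must leave a box \eqref{boxm} uncovered, I would analyse how boxes outside $E$ can cover the finest cells of $E$. Writing $E$ as a $b^{m-d_1}\times b^{m-d_2}$ array of cells, a chosen $X_\ell$ lying in the column-block of $E$ but not its row-block can only cover full ``columns'' of $E$ near $X_\ell$, one lying in the row-block but not the column-block can only cover full ``rows'', and one lying in neither covers nothing at all. Hence the cells of $E$ left uncovered form a product (uncovered columns) $\times$ (uncovered rows), which is non-empty unless all columns, or all rows, of $E$ are covered. Covering, say, all columns of $E$ leads, via the laminar (tree) structure of the $b$-adic intervals, to a recursive covering problem one scale finer, in which the non-conflict constraint limits the number of boxes available at each scale.

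The main obstacle is precisely this last step: controlling the recursion. A naive volume or cell count fails because the coverings produced by different steps overlap heavily (a single box already covers a full $b^{-m}\times 1$ strip), so the bound must come from the interaction between the non-conflict condition and the shrinking supply of columns aligned with the unused box at each level of the $b$-adic tree. I expect the cleanest route is an induction on $m$ (or on the depth in the tree) with a hypothesis strong enough to track both the coverage deficit and the available column-blocks; the base case $m\le 1$ is the elementary fact that a maximal partial permutation matrix is a full one. With the non-emptiness of $\cU_n$ for $n<b^m$ established, Lemma~\ref{pr1} gives $n=b^m$, and Lemma~\ref{pr2} then identifies the output as a $(0,m,2)$-net.
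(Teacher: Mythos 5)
Your setup is sound and runs parallel to the paper's own strategy: both arguments reduce the lemma to showing that the algorithm cannot stop while $n<b^m$, i.e.\ that $\cU_n\neq\emptyset$ as long as fewer than $b^m$ boxes have been chosen, and both exploit the non-conflict property $\cE_m(X_\ell)\cap\cE_m(X_k)=\emptyset$. Your geometric observations (that $\cU_n$ is a union of cells of the form \eqref{boxm}, and that an elementary box $E$ disjoint from all $X_\ell$ can only have cells covered in full columns or full rows, so the uncovered cells form a product set) are correct. But the proposal has a genuine gap, and you name it yourself: the entire weight of the lemma rests on the step you call ``the main obstacle'', namely ruling out that all columns (or all rows) of an empty elementary box are covered from outside. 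You sketch that this leads to a covering problem one scale finer and then write that you ``expect'' an induction on $m$ with a suitably strengthened hypothesis to work; that is a conjecture, not an argument. Without it, nothing is proved: your counting identity $n(m+1)=b^m(m+1)$ only becomes available after one knows that every elementary box contains a chosen box. (Incidentally, your reduction is also stronger than necessary: since the $b^m$ full-height columns $[\tfrac{k}{b^m},\tfrac{k+1}{b^m})\times[0,1)$ are pairwise disjoint elementary boxes, each containing at most one $X_\ell$, it suffices to show that an \emph{empty column} forces an uncovered cell; the general shape $E$ is not needed.)

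The paper closes exactly this gap, with a tool your plan never invokes: the pigeonhole principle, applied once per scale. If $n<b^m$, some column $[\tfrac{k}{b^m},\tfrac{k+1}{b^m})\times[0,1)$ contains no $X_\ell$; normalize $k=0$. Among the $b$ boxes $[0,\tfrac{1}{b^{m-1}})\times[\tfrac{k}{b},\tfrac{k+1}{b})$, one must also be empty: otherwise they would contribute $b$ chosen boxes, all of which avoid the empty column and hence lie in the $b-1$ disjoint elementary boxes $[\tfrac{h}{b^m},\tfrac{h+1}{b^m})\times[0,1)$, $h=1,\ldots,b-1$, so two chosen boxes would share an elementary box, contradicting non-conflict. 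Iterating scale by scale (at stage $r$ the $b$ candidates $[0,\tfrac{1}{b^{m-r-1}})\times[\tfrac{k}{b^{r+1}},\tfrac{k+1}{b^{r+1}})$ are tested against the $b-1$ disjoint boxes of shape $b^{-(m-r)}\times b^{-r}$ adjacent to the current empty box) produces a nested chain of empty elementary boxes of all shapes $b^{-m}\times 1,\; b^{-(m-1)}\times b^{-1},\;\ldots,\;1\times b^{-m}$; after the WLOG normalizations their common cell $[0,b^{-m})^2$ belongs to no $\cE_m(X_\ell)$ and hence still lies in $\cU_n$, so the algorithm could not have stopped. This pigeonhole descent is precisely the recursion control missing from your plan; if you want to complete your version, that is the argument to supply.
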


\begin{proof}
Assume that Algorithm~\ref{alg1} terminates after the step $n < b^m$. Then there exists a $k \in \{0,1,\ldots,b^m-1\}$ such that $$X_{\ell} \not\subseteq \left[\frac{k}{b^m},\frac{k+1}{b^m}\right)\times [0,1)\ \ \ \mbox{ for all $\ell=1,2,\ldots,n$.}$$  
We may assume without loss of generality that $k=0$. Hence $X_{\ell} \not\subseteq [0,\tfrac{1}{b^m})\times [0,1)$ for all $\ell=1,2,\ldots,n$.

Now consider elementary boxes of the form 
\begin{equation}\label{step2}
\left[0,\frac{1}{b^{m-1}}\right) \times \left[\frac{k}{b},\frac{k+1}{b}\right) \ \ \ \ \mbox{ for $k=0,1,\ldots,b-1$.}
\end{equation} 
Assume that all of these $b$ boxes contain one of $X_1,\ldots,X_n$. Then 
\begin{equation}\label{pgprinc}
\bigcup_{h=1}^{b-1} \left(\left[\frac{h}{b^m},\frac{h+1}{b^m}\right)\times [0,1)\right)
\end{equation}
contains $b$ intervals $X_j$. According to the pigeonhole principle there must be an interval among the $b-1$ intervals in union \eqref{pgprinc} which contains two $X_j$'s. This however is impossible due to the definition of the algorithm.
Thus we have shown that at least one of the intervals in \eqref{step2} does not contain a $X_j$.

Again it is no loss of generality if we assume that $X_j \not\subseteq [0,\tfrac{1}{b^{m-1}}) \times [0,\frac{1}{b})$ for all $j=1,\ldots,n$.

Now we consider elementary boxes of the form 
\begin{equation}\label{step3}
\left[0,\frac{1}{b^{m-2}}\right)\times \left[\frac{k}{b^2},\frac{k+1}{b^2}\right) \ \ \ \ \mbox{ for $k=0,1,\ldots,b-1$.}
\end{equation}

Assume that all of these $b$ boxes contain one of $X_1,\ldots,X_n$. Then 
\begin{equation}\label{pgprinc2}
\bigcup_{h=1}^{b-1} \left(\left[\frac{h}{b^{m-1}},\frac{h+1}{b^{m-1}}\right)\times \left[0,\frac{1}{b}\right)\right)
\end{equation}
contains $b$ intervals $X_j$. According to the pigeonhole principle there must be an interval among the $b-1$ intervals in union \eqref{pgprinc2} which contains two $X_j$'s. This however is impossible due to the definition of the algorithm.
Thus we have shown that at least one of the intervals in \eqref{step3} does not contain a $X_j$.

Again it is no loss of generality if we assume that $X_j \not\subseteq [0,\tfrac{1}{b^{m-2}}) \times [0,\frac{1}{b^2})$ for all $j=1,\ldots,n$.

If we continue this process we finally find that $\cU_n$ contains a $b$-adic box with side length $b^{-m}$. For example if in each step $k=0$ then this is the box $[0,\tfrac{1}{b^m})^s$. Therefore there is space to choose a further interval $X_{n+1}$.
\end{proof}

\section{Algorithm~\ref{alg1} for arbitrary dimension} 

In principle we can formulate Algorithm~\ref{alg1} also in arbitrary dimension $s \ge 2$:

\setcounter{algorithm}{0}

\renewcommand{\thealgorithm}{\arabic{algorithm}'}

\begin{algorithm}[H]
\caption{Construction of a finite sequence of boxes of the form \eqref{boxm} in dimension $s$}\label{alg2}
\begin{algorithmic}[1]
\STATE \textbf{Input:} base $b$, number of dimensions $s$, resolution $m$%\\[2mm]
%\STATE \textbf{Output:} finite sequence $X_0,X_1,\ldots,X_n$ of boxes of the form \eqref{boxm} \\[2mm]
%\BEGIN
\STATE Set $n=1$;
\STATE Set $\cU_1=[0,1)^s$;
\REPEAT
\STATE Choose an arbitrary box $$X_n=\prod_{j=1}^s \left[\frac{u_j(n)}{b^{m}},\frac{u_j(n)+1}{b^{m}}\right)\subseteq \cU_n ;$$ 
\STATE Set $n=n+1$;
\STATE Set $$\cU_n=\cU_{n-1} \setminus \bigcup_{E \in \cE_m(X_{n-1})}E;$$
\UNTIL{$\cU_n=\emptyset$}
\RETURN $X_1,X_2,\ldots,X_n$
%\END
\end{algorithmic}
\end{algorithm}

\renewcommand{\thealgorithm}{\arabic{algorithm}}

Then Algorithm~\ref{alg2} still terminates after at most $b^m$ steps (same proof as for Lemma~\ref{pr1}) and if it terminates after exactly $b^m$ steps, then the output again constitutes a $(0,m,s)$-net in base $b$ (same proof as for Lemma~\ref{pr2}). The problem is that we cannot guarantee that the algorithm runs until the $b^m$th step, i.e., we do not have a counterpart of Lemma~\ref{pr3} for dimension $s \ge 3$. Even for $s=3$ and $b=2$ there are instances where the algorithm stops before $b^m$. An example is illustrated in Figure~\ref{f2}. A necessary condition in order that Algorithm~\ref{alg2} runs until $n=b^m$ is of course that $s \le b+1$ since otherwise a $(0,m,s)$-net in base $b$ cannot exist.

\begin{figure}[h]
\begin{center}
\input{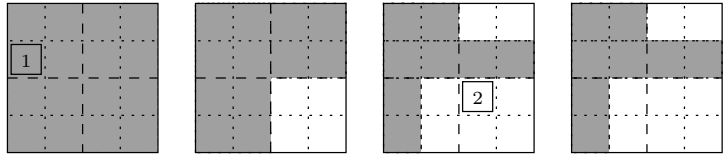} 
\caption{Situation in dimension $s=3$ and base $b=2$ where Algorithm~\ref{alg2} stops already after 2 steps. Each square is a slice of the cube of height $1/4$.}
     \label{f2}
\end{center}
\end{figure}

\section{The second algorithm} The second proposed algorithm is a recursive construction in two dimensions which mimics the construction of Hammersley. It relies on the observation, that every $(0,m,2)$-net in base $b$ with 
$m\ge 2$ induces a $(0,m-1,2)$-net on each of the $b$ rectangles with 
exactly one side length equal to $b^{-1}$, that partition the unit square.

The algorithm we propose synthesizes a $(0,m,2)$-net by (recursively)
generating $b$ (not necessarily different) $(0,m-1,2)$-nets, scaling
and shifting them and adjusting the points in a way that guarantees
that the net-property is satisfied.

Let $\QQ(b^m):=\{0,\tfrac{1}{b^m},\tfrac{2}{b^m},\ldots,\tfrac{b^m-1}{b^m}\}$ be the set of $b$-adic rationals with denominator not larger than $b^m$.
Recall that any $b$-adic $b^{-m} \times b^{-m}$ box is uniquely described by 
the coordinates of its left lower corner. So we assume in the following without loss of generality that the elements of a $(0,m,2)$-net in base $b$ belong to $\QQ(b^m)\times \QQ(b^m)$.

Let ${\cal S}_b$ denote the set of permutations of the $b$ elements
$\{0,\ldots,b-1\}$. 

We also need the following two mappings: 
\begin{itemize}
\item for $b \in \NN$, $b \ge 2$, let $A_b:\RR^2\rightarrow \RR^2$ be the linear scaling by $b^{-1}$ in the direction of the first coordinate, i.e., 
$A_b(x,y)=(x/b,y)$, and
\item for $m \in \NN$, and permutations $\pi_0,\ldots ,\pi_{b^m -1} \in {\cal S}_b$ let $\psi_{b,m}:\QQ(b^{m})\times \QQ(b^{m-1}) \rightarrow \QQ(b^{m})\times \QQ(b^{m})$, $$\psi_{b,m}(x,y):=
(x,y+\pi_{b^{m-1}y}(x_1)b^{-m}),\ \ \ \mbox{ where $x_1=\lfloor b x\rfloor$.}$$
\end{itemize}

\begin{algorithm}[H]
\caption{Recursive construction of a $(0,m,2)$-net in base $b$}\label{alg3}
\begin{algorithmic}[1]
\STATE \textbf{Input:} base $b$, final resolution $m$
\STATE Set $\cP_0=\{(0,0)\}$;
\FOR{$n=1$ to $m$}
\STATE Choose $b^{n-1}$ permutations $\pi_0^{(n)},\ldots,\pi_{b^{n-1}-1}^{(n)}\in {\cal S}_b$; 
\STATE Set $\widehat{\cP}_n:=\bigcup_{j=0}^{b-1} A_b(\cP_{n-1}+(j,0))$;
\STATE Set $\cP_n:=\psi_{b,n}(\widehat{\cP}_n)$
\ENDFOR
\RETURN $\cP_m$
%\END
\end{algorithmic}
\end{algorithm}

\begin{theorem}
The output $\cP_m$ of Algorithm~\ref{alg3} is a $(0,m,s)$-net in base $b$, and therefore its discrepancy satisfies the bound \eqref{disc0m2}.
\end{theorem}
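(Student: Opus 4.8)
The statement concerns the plane, so $(0,m,s)$ is to be read as $(0,m,2)$. The plan is to prove by induction on $n$ the assertion that, for every $n\in\{0,1,\ldots,m\}$, the set $\cP_n$ is a $(0,n,2)$-net in base $b$ with $\cP_n\subseteq\QQ(b^n)\times\QQ(b^n)$ and $|\cP_n|=b^n$. The base case $\cP_0=\{(0,0)\}$ is the trivial $(0,0,2)$-net. For the inductive step I would assume the assertion for $\cP_{n-1}$ and treat the two assignments $\widehat{\cP}_n:=\bigcup_{j=0}^{b-1}A_b(\cP_{n-1}+(j,0))$ and $\cP_n:=\psi_{b,n}(\widehat{\cP}_n)$ in turn. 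The first assignment places $b$ horizontally compressed copies of $\cP_{n-1}$, the $j$th copy $A_b(\cP_{n-1}+(j,0))$ sitting in the vertical strip $[j/b,(j+1)/b)\times[0,1)$; since the strips are disjoint and each factor map is injective, $\widehat{\cP}_n$ has exactly $b^n$ points and lies in $\QQ(b^n)\times\QQ(b^{n-1})$. Moreover $\psi_{b,n}$ is injective on it (it fixes $x$, and two points with a common $x$-coordinate lie in the same copy, where the $x$-values are distinct), so $|\cP_n|=b^n$ as well.

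Next I would record the key structural feature of $\psi_{b,n}$: it fixes every $x$-coordinate and moves each point with $y$-coordinate $k/b^{n-1}$ only within the sub-interval $[k/b^{n-1},(k+1)/b^{n-1})$, because $\pi_k(x_1)b^{-n}\le(b-1)b^{-n}<b^{-(n-1)}$. Consequently membership in any elementary interval $I=[a/b^{d_1},(a+1)/b^{d_1})\times[c/b^{d_2},(c+1)/b^{d_2})$ with $d_1+d_2=n$ and $d_2\le n-1$ (equivalently $d_1\ge 1$) is unchanged when passing from $\widehat{\cP}_n$ to $\cP_n$. For such $I$ the condition $d_1\ge 1$ forces the $x$-interval into a single strip $j=\lfloor a/b^{d_1-1}\rfloor$, and unwinding the scaling $A_b(\cdot+(j,0))$ shows that the points of $\widehat{\cP}_n$ in $I$ correspond bijectively to the points of $\cP_{n-1}$ in an elementary interval of volume $b^{-(n-1)}$. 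By the induction hypothesis there is exactly one such point, so every elementary interval of volume $b^{-n}$ with $d_1\ge 1$ contains exactly one point of $\cP_n$.

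The remaining and decisive case is $d_1=0$, $d_2=n$, i.e.\ the intervals $[0,1)\times[c/b^n,(c+1)/b^n)$ on which $\widehat{\cP}_n$ (whose $y$-coordinates are coarser, lying in $\QQ(b^{n-1})$) fails the net property and which $\psi_{b,n}$ is designed to repair. Writing $c=kb+r$ with $r\in\{0,\ldots,b-1\}$, a point of $\cP_n$ lands in this interval precisely when its preimage in $\widehat{\cP}_n$ has $y$-coordinate $k/b^{n-1}$ and strip index $j$ with $\pi_k^{(n)}(j)=r$, that is $j=(\pi_k^{(n)})^{-1}(r)$. Since $\pi_k^{(n)}$ is a permutation this singles out a unique strip, and within that strip the induction hypothesis (applied to the horizontal slab $[0,1)\times[k/b^{n-1},(k+1)/b^{n-1})$, which contains exactly one point of $\cP_{n-1}$, necessarily at height exactly $k/b^{n-1}$ because $\cP_{n-1}\subseteq\QQ(b^{n-1})\times\QQ(b^{n-1})$) yields exactly one contributing point. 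Hence each such interval also contains exactly one point, completing the inductive step; iterating up to $n=m$ gives that $\cP_m$ is a $(0,m,2)$-net and hence satisfies \eqref{disc0m2}. I expect this last case to be the only real obstacle: the whole role of the permutations is to turn the bijection $j\mapsto\pi_k^{(n)}(j)$ into a perfect distribution of the $b$ points lying at heights in a fixed horizontal slab among the $b$ finest $y$-subintervals, and the argument must use bijectivity of $\pi_k^{(n)}$ in an essential way.
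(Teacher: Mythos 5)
Your proof is correct and follows essentially the same route as the paper: an induction starting from $\cP_0$ whose inductive step is exactly the paper's Proposition (stated there slightly more generally, for $b$ possibly different $(0,m-1,2)$-nets), with the same two-case analysis of elementary intervals --- those with $d_1\ge 1$ handled by unwinding $A_b(\cdot+(j,0))$ and noting that $\psi_{b,n}$ moves each point only within its horizontal slab of height $b^{-(n-1)}$, and the critical intervals $[0,1)\times[c/b^n,(c+1)/b^n)$ handled via bijectivity of the permutations. The only cosmetic difference is that in this last case you invert $\pi_k^{(n)}$ to locate the unique contributing strip and then apply the induction hypothesis there, whereas the paper argues forwards: each of the $b$ boxes $[\ell/b,(\ell+1)/b)\times[j/b^{n-1},(j+1)/b^{n-1})$ contains one point of the new net, and exactly one of these points acquires second coordinate equal to $c/b^n$.
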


The proof of this result follows from the observation that $\cP_0$ is a $(0,0,2)$-net in base $b$ and by the following proposition which is slightly more general than necessary.

\begin{proposition}
Let $b,m\in \NN$, $b \ge 2$. Let $b$ (not necessarily different) $(0,m-1,2)$-nets 
$\cP_{j}$ in base $b$ for $j=0,\ldots,b-1$ be given 
as well as $b^{m-1}$ permutations
$\pi_0,\ldots,\pi_{b^{m-1}-1}\in {\cal S}_b$.  
Construct a set $\cP$ of $b^m$ points as follows: put
$$
\widehat\cP:=\bigcup_{j=0}^{b-1} A_b(\cP_{j}+(j,0)) \ \ \ \mbox{ and } \ \ \ 
\cP:=\psi_{b,m}(\widehat\cP).$$
Then $\cP$ is a $(0,m,2)$-net in base $b$.
\end{proposition}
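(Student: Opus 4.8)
The plan is to verify directly that every elementary $b$-adic box of volume $b^{-m}$ contains exactly one point of $\cP$. Since $\cP$ has $b^m$ points and, by Lemma~\ref{le1}, there are exactly $b^m\binom{m+1}{m}=b^m(m+1)$ such boxes while each point lies in exactly $m+1$ of them, a counting argument reduces the claim to showing that no box contains two points; equivalently, I will show the net property holds for all the relevant box shapes. In dimension two the elementary boxes of volume $b^{-m}$ have the form $[\tfrac{a}{b^{d}},\tfrac{a+1}{b^{d}})\times[\tfrac{c}{b^{m-d}},\tfrac{c+1}{b^{m-d}})$ for $d=0,1,\ldots,m$, so I would argue box-shape by box-shape, splitting off the two extreme cases from the generic case.

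First I would record what the two maps do. The scaling $A_b$ together with the translations places the $j$th input net $\cP_j$ into the vertical strip $[\tfrac{j}{b},\tfrac{j+1}{b})\times[0,1)$, compressing its first coordinate by $b$; thus $\widehat\cP$ is a $b^m$-point set whose first coordinates have resolution $b^{-m}$ and whose second coordinates still have resolution $b^{-(m-1)}$, which is exactly why $\psi_{b,m}$ is defined on $\QQ(b^m)\times\QQ(b^{m-1})$. The map $\psi_{b,m}$ then perturbs only the second coordinate by a term of size $b^{-m}$, namely $\pi_{b^{m-1}y}(x_1)b^{-m}$ with $x_1=\lfloor bx\rfloor$, so it refines the $y$-resolution from $b^{-(m-1)}$ to $b^{-m}$ while leaving $x$ untouched and without moving any point out of its $b^{-(m-1)}$-tall horizontal slab. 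The key bookkeeping observation is that $x_1=\lfloor bx\rfloor$ is precisely the index $j$ of the strip, and $\lfloor b^{m-1}y\rfloor$ indexes the horizontal slab, so the permutation applied to a point depends exactly on which strip and which slab it sits in.

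Next I would treat the box shapes. \emph{(i)} For the boxes that are full-height in $y$ (the shape $d=m$, i.e.\ $[\tfrac{a}{b^m},\tfrac{a+1}{b^m})\times[0,1)$): such a box lies entirely in one strip $j$ and corresponds, after undoing $A_b$, to a box $[\tfrac{a'}{b^{m-1}},\tfrac{a'+1}{b^{m-1}})\times[0,1)$ for $\cP_j$; since $\cP_j$ is a $(0,m-1,2)$-net this contains exactly one point of $\cP_j$, and as $\psi_{b,m}$ fixes $x$ and keeps $y$ in $[0,1)$, the box contains exactly one point of $\cP$. \emph{(ii)} For the boxes that are full-width in $x$ (shape $d=0$, i.e.\ $[0,1)\times[\tfrac{c}{b^m},\tfrac{c+1}{b^m})$): here the width-one box meets every strip and every point's $x_1=j$ ranges over all of $\{0,\ldots,b-1\}$ within a fixed $b^{-(m-1)}$-slab; because each $\cP_j$ is a $(0,m-1,2)$-net, each strip contributes exactly one point to the slab $[\tfrac{c'}{b^{m-1}},\tfrac{c'+1}{b^{m-1}})\times[0,1)$ before perturbation, and the fact that $\pi_{c'}$ is a \emph{permutation} of $\{0,\ldots,b-1\}$ guarantees these $b$ points land in distinct $b^{-m}$-subslabs, hence exactly one per box of shape $d=0$. \emph{(iii)} For the generic intermediate shapes ($1\le d\le m-1$): such a box has width $b^{-d}\le b^{-1}$, so it lies in a single strip $j$, and undoing $A_b$ turns it into an elementary box of volume $b^{-(m-1)}$ for $\cP_j$ of shape $[\tfrac{a'}{b^{d-1}},\ldots)\times[\tfrac{c'}{b^{m-d}},\ldots)$; the net property of $\cP_j$ gives exactly one point there, and since within one strip the perturbation index $x_1=j$ is constant the map $\psi_{b,m}$ acts on that slab as a single translation by $\pi_{c'}(j)b^{-m}$, which cannot create collisions. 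This exhausts all shapes.

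The main obstacle I expect is case \emph{(ii)}, the full-width boxes, because this is the only shape whose correctness genuinely uses that $\pi_{c'}$ is a bijection rather than merely being measure-preserving; all other shapes reduce cleanly to the induction hypothesis on a single $\cP_j$. I would therefore spend the most care confirming that the $b$ points deposited by the $b$ strips into a fixed $b^{-(m-1)}$-slab carry pairwise distinct values $x_1=0,1,\ldots,b-1$ (so that $\pi_{c'}$ injectively assigns them to the $b$ distinct sub-boxes of that slab), and that the index $c'=\lfloor b^{m-1}y\rfloor$ used to select the permutation is constant across the slab so the same $\pi_{c'}$ is applied to all $b$ of them. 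Once these three families of box shapes are verified, every elementary $b$-adic box of volume $b^{-m}$ contains exactly one point, so $\cP$ is a $(0,m,2)$-net in base $b$.
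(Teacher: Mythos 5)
Your proof is correct and follows essentially the same route as the paper: narrow boxes (your cases (i) and (iii), which the paper merges into a single case $k\in\{1,\ldots,m\}$) are pulled back through $A_b$ to the net property of a single $\cP_j$, using that $\psi_{b,m}$ fixes $x$ and perturbs $y$ by less than $b^{-(m-1)}$ so no point leaves its slab, while full-width boxes are handled exactly as in the paper via the bijectivity of the slab's permutation acting on the distinct strip indices $x_1$. The only cosmetic difference is your initial counting reduction via Lemma~\ref{le1}, which you do not actually end up needing.
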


\begin{proof}
We need to show that every elementary interval  of volume $b^{-m}$ contains
exactly one element of $\cP$.
Consider first an elementary interval $I$ of the form $$I=\left[\frac{a_1}{b^k},\frac{a_1+1}{b^k}\right)\times \left[\frac{a_2}{b^{m-k}},\frac{a_2+1}{b^{m-k}}\right)$$ with 
$a_1\in \{0,\ldots,b^{k}-1\}$, $a_2\in\{0,\ldots,b^{m-k}-1\}$ and with 
$k\in \{1,\ldots,m\}$. We may thus write $$I=A_b(J+(i,0)),$$ where $i=\lfloor a_1b^{-(k-1)}\rfloor$ and where $J$ is the elementary interval $$J=\left[\frac{a_1-i b^{k-1}}{b^{k-1}},\frac{a_1-i b^{k-1}+1}{b^{k-1}}\right)\times \left[\frac{a_2}{b^{m-k}},\frac{a_2+1}{b^{m-k}}\right)$$
of volume $b^{-(m-1)}$. This interval $J$ contains 
precisely one element of $\cP_i$ and hence $I$ contains precisely one element of $A_b(\cP_i+(i,0))$. On the other hand we have $A_b(\cP_j+(j,0)) \cap I=\emptyset$ for $j \not=i$. Thus $I$ contains precisely one element
of $\widehat\cP$. Since the second coordinate of this element belongs to $\QQ(b^{m-1})$, and since $\psi_{b,m}$ adds at most $\tfrac{b-1}{b^m}$ 
to the second coordinate, $I$ contains precisely one element of $\cP$.\\

It remains to consider the case that $I$ is an elementary interval of the form $$I=[0,1)\times \left[\frac{a_2}{b^m},\frac{a_2+1}{b^m}\right).$$ 
Let $j= \lfloor a_2 b^{-1} \rfloor$.
By construction, each of the $b$ elementary intervals 
$$\left[\frac{\ell}{b},\frac{\ell+1}{b}\right)\times \left[\frac{j}{b^{m-1}},\frac{j+1}{b^{m-1}}\right),$$ for $\ell=0,\ldots,b-1$, contains 
precisely one element of $\cP$. But by construction of $\psi_{b,m}$, 
precisely for one of these elements the second coordinate equals 
$a_2 b^{-m}$.
\end{proof}

\begin{remark}\rm
If we choose the identity for every permutation in Algorithm~\ref{alg3}, then we obtain the Hammersley net in base $b$ as output, i.e., $\cP_m=\cH_{m,b}$ (see \eqref{Ham}). 
\end{remark}

\noindent{\bf Author's Addresses:}

\noindent Gunther Leobacher and Friedrich Pillichshammer, Institut f\"{u}r Finanzmathematik und angewandte Zahlentheorie, Johannes Kepler Universit\"{a}t Linz, Altenbergerstra{\ss}e 69, A-4040 Linz, Austria. Email: ralph.kritzinger(at)jku.at, friedrich.pillichshammer(at)jku.at\\

\noindent Thomas Schell, Softwaredeveloper Calculation Apps, CORPORATE Information Services, PALFINGER AG, F.-W.-Scherer-Strasse 24, A-5020 Salzburg, Austria. Email: ts(at)0-1.at

\end{document}